\theoremstyle{plain}
\newtheorem*{Theorem}{Main Theorem}
\newtheorem{theorem}{Theorem}
\newtheorem{proposition}[theorem]{Proposition}
\theoremstyle{definition}
\newtheorem*{remark}{Remark}
\newcommand{\M}{\mathscr{M}}
\newcommand{\BH}{\mathbb{B}(\mathcal{H})}
\newcommand{\Ha}{\mathcal{H}}
\newcommand{\F}{\Phi}
\newcommand{\al}{\alpha}
\newcommand{\1}{\mathbb{1}}
\newcommand{\ro}{\rho}
\newcommand{\tr}{\operatorname{tr}}
\renewcommand{\geq}{\geqslant}
\renewcommand{\leq}{\leqslant}
\renewcommand{\epsilon}{\varepsilon}
\newcommand{\s}{\operatorname{supp}}
\begin{document}
\title[Mappings preserving Renyi's entropy]{Mappings preserving quantum Renyi's entropies in von Neumann algebras}
\author{Andrzej \L uczak}\author{Hanna Pods\k{e}dkowska}\author{Rafa{\l} Wieczorek}
\address{Faculty of Mathematics and Computer Science\\
         \L\'od\'z University\\
         ul. S. Banacha 22\\
         90-238 \L\'od\'z, Poland}

\email[Andrzej \L uczak]{andrzej.luczak@wmii.uni.lodz.pl}
\email[Hanna Pods\k{e}dkowska]{hanna.podsedkowska@wmii.uni.lodz.pl}
\email[Rafa{\l} Wieczorek]{rafal.wieczorek@wmii.uni.lodz.pl}
\thanks{}
\subjclass[2010]{Primary: 46L53; Secondary: 81P45}
\date{}
\begin{abstract}
 We investigate the situation when a normal positive linear unital map on a semifinite von Neumann algebra leaving the trace invariant does not change fixed quantum Renyi's entropy of the density of a normal state. It is also shown that such a map does not change the entropy of any density if and only if it is a Jordan *-isomorphism on the algebra.
\end{abstract}
\maketitle

\section*{Introduction}
In the paper, the question of invariance of quantum Renyi's entropies of a density under the action of a normal positive linear unital map $\F$ is addressed in the case of a semifinite von Neumann algebra. This can be seen as a follow-up to the investigation in \cite{LP} where such a problem was analysed for the Segal entropy. It is interesting that in both the cases: Segal's entropy and any single Renyi's entropy the answer is the same, namely, $\F$ restricted to the algebra generated by the density is a *-isomorphism. The question of invariance of Renyi's entropy for every density is also considered, and it turns out that this holds if and only if $\F$ is a Jordan *-isomorphism on the algebra.

In our analysis, we aim at full generality considering densities which may be unbounded. For such densities a number of specific Jensen's inequalities are derived. These inequalities are well-known, in a more general form, for bounded operators.

It is interesting to observe that passing from the case of bounded densities to unbounded ones requires some arguments referring to the algebra of measurable operators, in particular, convergence in measure for such operators is widely exploited.

\section{Preliminaries and notation}
Let $\M$ be a semifinite von Neumann algebra of operators acting on a Hilbert space $\Ha$ with a normal semifinite faithful trace $\tau$, identity $\1$, and predual $\M_*$. By $\M^+$
we shall denote the set of positive operators in $\M$, and by $\M_*^+$ --- the set of positive functionals in $\M_*$. These functionals will be sometimes referred to as
(non-normalised) states.

The algebra of \emph{measurable operators} $\widetilde{\M}$ is defined as a topological ${}^*$-algebra of densely defined closed operators on $\mathcal{H}$ affiliated with $\M$ with
strong addition $\dotplus$ and strong multiplication $\cdot$, i.e.
\[
 x\dotplus y=\overline{x+y},\qquad x\cdot y=\overline{xy},\qquad x,y\in\widetilde{\M},
\]
where $\overline{x+y}$ and $\overline{xy}$ are the closures of the corresponding operators defined by addition and composition respectively on the natural domains given by the
intersections of the domains of the $x$ and $y$ and of the range of $y$ and the domain of $x$ respectively. In what follows, we shall omit the dot in the symbols of these operations
and write simply $x+y$ and $xy$ to denote $x\dotplus y$ and $x\cdot y$. The translation-invariant measure topology is defined by a fundamental system of neighbourhoods of $0$, $\{N(\epsilon,\delta): \epsilon,\delta>0\}$, given by
\begin{align*}
 N(\epsilon,\delta)=\{x\in\widetilde{\M}:&\text{ there exists a projection $p$ in $\M$ such that}\\ &xp\in\M,\quad\|xp\|\leq\epsilon\quad\text{and} \quad\tau(\1-p)\leq\delta\}.
\end{align*}
The following `technical' form of convergence in measure proved in \cite[Proposition 2.7]{Y} is useful. Let
\[
 |x_n-x|=\int_0^\infty t\,e_n(dt)
\]
be the spectral representation of $|x_n-x|$ with spectral measures $e_n$ taking values in $\M$ since $x_n-x$, and thus $|x_n-x|$, are affiliated with $\M$. Then $x_n\to x$ \emph{in measure} if and only if for each $\epsilon>0$
\begin{equation}\label{cim}
 \tau(e_n([\epsilon,+\infty)))\to 0.
\end{equation}

The domain of a linear operator $x$ on $\Ha$ will be denoted by $\mathcal{D}(x)$.

For each $\ro\in\M_*$, there is a measurable operator $h$ such that
\[
 \ro(x)=\tau(xh)=\tau(hx), \quad x\in\M.
\]
The space of all such operators is denoted by $L^1(\M,\tau)$, and the correspondence above is one-to-one and isometric, where the norm on $L^1(\M,\tau)$, denoted by $\|\cdot\|_1$, is
defined as
\[
 \|h\|_1=\tau(|h|), \quad h\in L^1(\M,\tau).
\]
(In the theory of noncommutative $L^p$-spaces for semifinite von Neumann algebras, it it shown that $\tau$ can be extended to the $h$'s as above; see  e.g. \cite{N,T,Y} for a detailed
account of this theory.) Moreover, to hermitian functionals in $\M_*$ correspond selfadjoint operators in $L^1(\M,\tau)$, and to states in $\M_*$ --- positive operators in
$L^1(\M,\tau)$.

Chebyschev's inequality
\[
 \tau(e([\epsilon,+\infty)))\leq\frac{\tau(|x|)}{\epsilon}=\frac{\|x\|_1}{\epsilon},
\]
where
\[
 |x|=\int_0^\infty t\,e(dt)
\]
is the spectral representation, together with the relation \eqref{cim} show that convergence in $\|\cdot\|_1$-norm implies convergence in measure.

For $\ro\in\M_*^+$, the corresponding element in $L^1(\M,\tau)$ is called the \emph{density} of $\ro$ and is denoted by $h_\ro$.

Let $\F\colon\M\to\M$ be a normal positive linear unital mapping such that $\tau\circ\F=\tau$ (such maps are called \emph{channels}, usually with an additional assumption of complete positivity imposed). It is easy to see that then $\F$ can be extended to a bounded map on $L^1(\M,\tau)$ (cf. \cite[Lemma 8]{LP}).

Let $\al\in(0,1)\cup(1,+\infty)$. The $\al$-\emph{Renyi entropy of} $\ro$, denoted by $S_\al(\ro)$, is defined as
\[
 S_\al(\ro)=\frac{1}{1-\al}\log\frac{\tau(h_\ro^\al)}{\tau(h_\ro)},
\]
under the assumption $h_\ro^\al\in L^1(\M,\tau)$, i.e. for the spectral representation of $h_\ro$
\begin{equation}\label{spec}
 h_\ro=\int_0^\infty t\,e(dt),
\end{equation}
we have
\[
 S_\al(\ro)=\frac{1}{1-\al}\log\frac{\int_0^\infty t^\al\,\tau(e(dt))}{\int_0^\infty t\,\tau(e(dt))}.
\]
Accordingly, we define $\al$-Renyi's entropy for $0\leq h\in L^1(\M,\tau)$ such that $h^\al\in L^1(\M,\tau)$, by the formula
\[
 S_\al(h)=\frac{1}{1-\al}\log\frac{\tau(h^\al)}{\tau(h)}=\frac{1}{1-\al}\log\frac{\int_0^\infty t^\al\,\tau(e(dt))}{\int_0^\infty t\,\tau(e(dt))},
\]
where $h$ has the spectral representation as in \eqref{spec}. For a map $\F$ as before, we are interested in the problem when this map does not change Renyi's entropy of a density $h$, i.e. when the equality
\[
 S_\al(h)=S_\al(\F(h))
\]
holds.

\section{Main theorem}
In this section, we aim at proving the following main result of the paper.
\begin{Theorem}\label{MT}
Let $\F$ be a normal linear unital mapping on $\M$ such that $\tau\circ\F=\tau$, and let for an arbitrary fixed $\al\in(0,1)\cup(1,+\infty)$ and a density $h$ such that $h^\al\in L^1(\M,\tau)$
\[
 S_\al(h)=S_\al(\F(h)).
\]
Then $\F|W^*(h)$ is a *-isomorphism for $\al\leq2$, and the same holds for $\al>2$ if we additionally assume that there exists $1<\gamma\leq2$ such that\\ $h^\gamma\in L^1(\M,\tau)$
(this assumption is automatically satisfied for a finite algebra).
\end{Theorem}
Let us first explain the general idea of the proof. The invariance of entropy is obviously equivalent to the equality
\begin{equation}\label{tau=}
 \tau(\F(h^\al))=\tau(h^\al)=\tau(\F(h)^\al).
\end{equation}
From this equality, the equality
\begin{equation}\label{F=}
 \F(h^\al)=\F(h)^\al
\end{equation}
is attempted to be derived, leading, in turn, to the final conclusion. It should be noted that this equality for $\al\in(0,1)\cup(1,2]$ implies for \emph{bounded} $h$ our final conclusion, which follows from \cite{P}. Observe that in some cases the derivation of the relation \eqref{F=} from \eqref{tau=} is simple. Namely, for $0<\al<1$, the function $[0,+\infty)\ni t\mapsto t^\al$ is operator concave, and Jensen's inequality for \emph{bounded} operators yields the inequality
\[
 \F(h^\al)\leq\F(h)^\al.
\]
For $1<\al\leq2$ the function $[0,+\infty)\ni t\mapsto t^\al$ is operator convex, and again Jensen's inequality for \emph{bounded} operators yields the inequality
\[
 \F(h^\al)\geq\F(h)^\al.
\]
In both the cases above, the faithfulness of $\tau$ together with the relation \eqref{tau=} yield the equality \eqref{F=}. In the general case of not necessarily bounded densities $h$, we could attempt to derive appropriate Jensen's inequalities (which we actually do for $1<\al\leq2$) however, a simpler way is possible. Namely, it turns out that in the case $0<\al<1$, \emph{the equality} \eqref{tau=} \emph{alone is sufficient for proving the thesis}. For $\al>1$, we follow the route described above and obtain the equality \eqref{F=}. From this we get
\[
 \F(h^\al)^\frac{1}{\al}=\F(h)=\F\big((h^\al)^\frac{1}{\al}\big)
\]
which is the equality \eqref{F=} with $h^\al$ instead of $h$ and $\frac{1}{\al}<1$ instead of $\al$, and thus we get the equality \eqref{tau=} in the form
\[
 \tau\big(\F\big((h^\al)^\frac{1}{\al}\big)\big)=\tau\big(\F(h^\al)^\frac{1}{\al}\big).
\]
As pointed out above, this gives the claim upon observing that\\ $W^*(h^\al)=W^*(h)$. Consequently, the proof of the theorem is divided into three parts: 1. Case $0<\al<1$, in which the thesis is proved;\\ 2. Case $1<\al<2$ and 3. Case $\al\geq2$, in which cases the equality $\F(h^\gamma)=\F(h)^\gamma$ for some $\gamma>1$ is obtained.
\begin{proof}[Proof for $0<\al<1$]
Let $t\geq0$. We have the formula
\begin{equation}\label{t}
 t^\al=\frac{\sin\al\pi}{\pi}\int_0^\infty\frac{1}{s^{1-\al}}\frac{t}{s+t}\,ds.
\end{equation}
Let $0\leq h\in L^1(\M,\tau)$. Since for $s>0$, we have
\[
 \|h(s\1+h)^{-1}\|_1\leq\|h\|_1\|(s\1+h)^{-1}\|_\infty\leq\frac{\|h\|_1}{s},
\]
there exists, for each $m>0$, Bochner's integral
\[
 z_m(h)=\frac{\sin\al\pi}{\pi}\int_m^\infty\frac{1}{s^{1-\al}}h(s\1+h)^{-1}\,ds
\]
with values in $L^1(\M,\tau)$. Let
\begin{equation}\label{sp}
 h=\int_0^\infty t\,e(dt)
\end{equation}
be the spectral representation of $h$. Then
\[
 \tau(h(s\1+h)^{-1})=\int_0^\infty\frac{t}{s+t}\,\tau(e(dt)),
\]
and on account of the Lebesgue monotone convergence theorem, Fubini's theorem, and \eqref{t} we get for $m\searrow0$
\begin{align*}
 &\tau(z_m(h))=\tau(\F(z_m(h)))=\frac{\sin\al\pi}{\pi}\int_m^\infty\frac{1}{s^{1-\al}}\tau(h(s\1+h)^{-1})\,ds\\
 =&\frac{\sin\al\pi}{\pi}\int_m^\infty\frac{1}{s^{1-\al}}\int_0^\infty\frac{t}{s+t}\,\tau(e(dt)\,ds\\
 &\nearrow\frac{\sin\al\pi}{\pi}\int_0^\infty\frac{1}{s^{1-\al}}\int_0^\infty\frac{t}{s+t}\,\tau(e(dt)\,ds\\
 =&\frac{\sin\al\pi}{\pi}\int_0^\infty\int_0^\infty\frac{1}{s^{1-\al}}\frac{t}{s+t}\,ds\,\tau(e(dt)=\int_0^\infty t^\al\tau(e(dt))=\tau(h^\al).
\end{align*}
By the same token, we obtain
\[
 \tau(z_m(\F(h)))\nearrow\tau(\F(h)^\al).
\]
Since
\[
 \frac{t}{s+t}=1-\frac{s}{s+t},
\]
we infer by virtue of \cite[Theorem 7]{LP} that
\begin{equation}\label{cc}
 \F(h(s\1+h)^{-1})\leq\F(h)(s\1+\F(h))^{-1},
\end{equation}
thus
\begin{align*}
 &\tau(z_m(\F(h)))-\tau(z_m(h))=\tau(z_m(\F(h)))-\tau(\F(z_m(h)))\\
 =&\frac{\sin\al\pi}{\pi}\int_m^\infty\frac{1}{s^{1-\al}}\tau(\F(h)(s\1+\F(h))^{-1}-\F(h(s\1+h)^{-1}))\,ds,
\end{align*}
and the function under the integral sign is nonnegative which means that $\tau(z_m(\F(h)))-\tau(z_m(h))$ increases as $m\searrow0$. But
\[
 \tau(z_m(\F(h)))-\tau(z_m(h))\nearrow\tau(\F(h)^\al)-\tau(h^\al)=0,
\]
consequently,
\[
 \int_m^\infty\frac{1}{s^{1-\al}}\tau(\F(h)(s\1+\F(h))^{-1}-\F(h(s\1+h)^{-1}))\,ds=0
\]
and the non-negativity of the function under the integral sign yields
\[
 \tau(\F(h)(s\1+\F(h))^{-1}-\F(h(s\1+h)^{-1}))=0 \quad \textit{almost everywhere},
\]
which by virtue of the inequality \eqref{cc} gives
\begin{equation}\label{F}
 \F(h)(s\1+\F(h))^{-1}=\F(h(s\1+h)^{-1}) \quad \textit{almost everywhere},
\end{equation}
i.e. \textit{everywhere} by continuity. Since
\[
 x(s\1+x)^{-1}=\1-s(s\1+x)^{-1}
\]
for $x\geq0$, the equality \eqref{F} yields
\[
 (s\1+\F(h))^{-1}=\F((s\1+h)^{-1}),
\]
and now repeating word for word the reasoning as in \cite[Theorem~12]{LP} we obtain the claim.
\end{proof}

\begin{proof}[Proof for $1<\al\leq2$]
Assume first that $\al<2$, and let $\beta=\al-1$. We have
\begin{equation}\label{t1}
 t^{1+\beta}=\frac{\sin\beta\pi}{\pi}\int_0^\infty\frac{1}{s^{1-\beta}}\frac{t^2}{s+t}\,ds,
\end{equation}
and the estimate
\[
 \|h^2(s\1+h)^{-1}\|_1\leq\|h\|_1\|h(s\1+h)^{-1}\|_\infty\leq\|h\|_1.
\]
For $0<m<M$ set
\[
 z_m^M(h)=\frac{\sin\beta\pi}{\pi}\int_m^M\frac{1}{s^{1-\beta}}h^2(s\1+h)^{-1}\,ds,
\]
which is Bochner's integral in $L^1(\M,\tau)$. Assume for a moment that $h$ is bounded, $h\in\M$. Then since
\[
 \|h^2(s\1+h)^{-1}\|_\infty=\frac{\|h\|_\infty^2}{s+\|h\|_\infty},
\]
we infer that $z_m^M(h)$ is also Bochner's integral in $\M$, and for each $\xi\in\Ha$ we have on account of Fubini's theorem
\begin{align*}
 \langle z_m^M(h)\xi|\xi\rangle&=\frac{\sin\beta\pi}{\pi}\int_m^M\frac{1}{s^{1-\beta}}\langle h^2(s\1+h)^{-1}\xi|\xi\rangle\,ds\\
 &=\frac{\sin\beta\pi}{\pi}\int_m^M\frac{1}{s^{1-\beta}}\int_0^\infty\frac{t^2}{s+t}\,\|e(dt)\xi\|^2\,ds\\
 &\leq\frac{\sin\beta\pi}{\pi}\int_0^\infty\frac{1}{s^{1-\beta}}\int_0^\infty\frac{t^2}{s+t}\,\|e(dt)\xi\|^2\,ds\\
 &=\frac{\sin\beta\pi}{\pi}\int_0^\infty\int_0^\infty\frac{1}{s^{1-\beta}}\frac{t^2}{s+t}\,ds\,\|e(dt)\xi\|^2\\
 &=\int_0^\infty t^{1+\beta}\,\|e(dt)\xi\|^2=\langle h^{1+\beta}\xi|\xi\rangle,
\end{align*}
showing that
\begin{equation}\label{ogr}
 z_m^M(h)\leq h^{1+\beta}.
\end{equation}
Now we drop the assumption that $h$ is bounded and put
\[
 h_n=\int_0^nt\,e(dt).
\]
We have
\[
 h_n^2(s\1+h_n)^{-1}\leq h_{n+1}^2(s\1+h_{n+1})^{-1}\leq h^2(s\1+h)^{-1},
\]
consequently,
\[
 z_m^M(h_n)\leq z_m^M(h_{n+1})\leq z_m^M(h).
\]
Moreover, by virtue of Fubini's theorem and the Lebesgue monotone convergence theorem, we obtain as $n\to\infty$
\begin{align*}
 \tau(z_m^M(h_n))&=\frac{\sin\beta\pi}{\pi}\int_m^M\frac{1}{s^{1-\beta}}\tau(h_n^2(s\1+h_n)^{-1})\,ds\\
 =&\frac{\sin\beta\pi}{\pi}\int_m^M\frac{1}{s^{1-\beta}}\int_0^n\frac{t^2}{s+t}\,\tau(e(dt)\,ds\\
 &=\frac{\sin\beta\pi}{\pi}\int_0^n\int_m^M\frac{1}{s^{1-\beta}}\frac{t^2}{s+t}\,ds\,\tau(e(dt)\\
 &\nearrow\frac{\sin\beta\pi}{\pi}\int_0^\infty\int_m^M\frac{1}{s^{1-\beta}}\frac{t^2}{s+t}\,ds\,\tau(e(dt)\\
 &=\frac{\sin\beta\pi}{\pi}\int_m^M\frac{1}{s^{1-\beta}}\int_0^\infty\frac{t^2}{s+t}\,\tau(e(dt)\,ds\\
 &=\frac{\sin\beta\pi}{\pi}\int_m^M\frac{1}{s^{1-\beta}}\tau(h^2(s\1+h)^{-1})\,ds=\tau(z_m^M(h)).
\end{align*}
This shows that for $n\to\infty$ we have
\begin{equation}\label{z}
 \begin{aligned}
  \|z_m^M(h)-z_m^M(h_n)\|_1&=\tau(z_m^M(h)-z_m^M(h_n))\\
  &=\tau(z_m^M(h))-\tau(z_m^M(h_n))\to0.
 \end{aligned}
\end{equation}
Further we have on account of \eqref{ogr}
\[
 z_m^M(h_n)\leq h_n^{1+\beta}\leq h^{1+\beta},
\]
which together with the relation \eqref{z} shows that
\[
 z_m^M(h)\leq h^{1+\beta}.
\]
Now we get, using the Lebesgue monotone convergence theorem with $m\searrow0$, $M\nearrow\infty$,
\begin{align*}
 \tau(z_m^M(h))&=\frac{\sin\beta\pi}{\pi}\int_m^M\frac{1}{s^{1-\beta}}\tau(h^2(s\1+h)^{-1})\,ds\\
 =&\frac{\sin\beta\pi}{\pi}\int_m^M\frac{1}{s^{1-\beta}}\int_0^\infty\frac{t^2}{s+t}\,\tau(e(dt))\,ds\\
 &=\frac{\sin\beta\pi}{\pi}\int_0^\infty\int_m^M\frac{1}{s^{1-\beta}}\frac{t^2}{s+t}\,ds\,\tau(e(dt)\\
 &\nearrow\frac{\sin\beta\pi}{\pi}\int_0^\infty\int_0^\infty\frac{1}{s^{1-\beta}}\frac{t^2}{s+t}\,ds\,\tau(e(dt)\\
 &=\int_0^\infty t^{1+\beta}\,\tau(e(dt))=\tau(h^{1+\beta}),
\end{align*}
showing that with $m\searrow0$, $M\nearrow\infty$
\[
 \|h^{1+\beta}-z_m^M(h)\|_1=\tau(h^{1+\beta}-z_m^M(h))=\tau(h^{1+\beta})-\tau(z_m^M(h))\to0,
\]
i.e.
\[
 z_m^M(h)\to h^{1+\beta} \quad \textit{in $\|\cdot\|_1$-norm}.
\]
Taking in the above reasoning $\F(h)$ in place of $h$, we obtain the relation
\[
 z_m^M(\F(h))\to\F(h)^{1+\beta} \quad \textit{in $\|\cdot\|_1$-norm}.
\]

Now we are in a position to prove the following version of Jensen's inequality
\[
 \F(h^{1+\beta})\geq\F(h)^{1+\beta}.
\]
Since for $x\geq0$
\[
 x^2(s\1+x)^{-1}=x-s\1+s^2(s\1+x)^{-1},
\]
we get by virtue of \cite[Theorem 7]{LP} that
\begin{equation}\label{F1}
 \begin{aligned}
  &\F(h^2(s\1+h)^{-1})-\F(h)^2(s\1+\F(h))^{-1}\\
  =&s^2(\F((s\1+h)^{-1})-(s\1+\F(h))^{-1})\geq0.
 \end{aligned}
\end{equation}
From the inequality \eqref{F1}, we obtain
\begin{align*}
 \F(z_m^M(h))&=\frac{\sin\beta\pi}{\pi}\F\Big(\int_m^M\frac{1}{s^{1-\beta}}h^2(s\1+h)^{-1}\,ds\Big)\\
 &=\frac{\sin\beta\pi}{\pi}\int_m^M\frac{1}{s^{1-\beta}}\F(h^2(s\1+h)^{-1})\,ds\\
 &\geq\frac{\sin\beta\pi}{\pi}\int_m^M\frac{1}{s^{1-\beta}}\F(h)^2(s\1+\F(h))^{-1})\,ds=z_m^M(\F(h)),
\end{align*}
and taking into account the relations
\[
 z_m^M(h)\to h^{1+\beta} \quad \textit{in $\|\cdot\|_1$-norm}
\]
and
\[
 z_m^M(\F(h))\to\F(h)^{1+\beta} \quad \textit{in $\|\cdot\|_1$-norm},
\]
together with the continuity of $\F$ in the $\|\cdot\|_1$-norm we prove the claim.

Now let $\al=2$. We want to prove that
\[
 \F(h^2)\geq\F(h)^2.
\]
With the notation as before we have
\[
 h_n=\int_0^nt\,e(dt)\nearrow\int_0^\infty t\,e(dt)=h \quad \textit{in $\|\cdot\|_1$-norm},
\]
and
\[
 h_n^2=\int_0^nt^2\,e(dt)\nearrow\int_0^\infty t^2\,e(dt)=h^2 \quad \textit{in $\|\cdot\|_1$-norm},
\]
thus
\[
 \F(h_n)\to\F(h) \quad \textit{in $\|\cdot\|_1$-norm}
\]
and
\[
 \F(h_n^2)\to\F(h^2) \quad \textit{in $\|\cdot\|_1$-norm}.
\]
In particular,
\[
 \F(h_n)\to\F(h) \quad \textit{in measure}
\]
and
\[
 \F(h_n^2)\to\F(h^2) \quad \textit{in measure}.
\]
Since multiplication is jointly continuous in the measure topology, we get
\[
 \F(h_n)^2\to\F(h)^2 \quad \textit{in measure}.
\]
Jensen's inequality for bounded operators yields
\[
 \F(h_n^2)\geq\F(h_n)^2,
\]
and passing to the limit in measure on both sides of the above inequality proves the claim.

Putting together the considerations above, we obtain the following Jensen inequality for $1<\al\leq2$ and $h\in L^1(\M,\tau)$, $h^\al\in L^1(\M,\tau)$,
\begin{equation}\label{JI}
 \F(h^\al)\geq\F(h)^\al.
\end{equation}
Consequently, we have for $1<\al\leq2$
\[
 \tau\big(\F(h^\al)-\F(h)^\al\big)=\tau\big(\F(h^\al)\big)-\tau\big(\F(h)^\al\big)=0,
\]
and the faithfulness of $\tau$ gives
\[
 \F(h^\al)=\F(h)^\al. \qedhere
\]
\end{proof}
Before passing to the case $\al>2$ we need some auxiliary results.
\begin{proposition}\label{zb}
Let $0<\al<1$, and let $0\leq x_n\in L^1(\M,\tau)$ be such that $x_n\nearrow x\in L^1(\M,\tau)$ in $\|\cdot\|_1$-norm. Then $x_n^\al\nearrow x^\al$ in $\|\cdot\|_1$-norm.
\end{proposition}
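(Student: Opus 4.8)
The plan is to run the same machine as in the proof for $0<\al<1$: use the integral representation \eqref{t} to write each power as an integral of resolvents, and then push all the limits through with Tonelli's theorem and the normality of $\tau$.

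First I would record the monotonicity. From $x_m-x_n\geq0$ for $m\geq n$ and closedness of the positive cone of $L^1(\M,\tau)$ under $\|\cdot\|_1$ one gets $x_n\leq x_{n+1}\leq x$, and since $t\mapsto t^\al$ is operator monotone for $0<\al<1$ this gives $0\leq x_n^\al\leq x_{n+1}^\al\leq x^\al$. So $(x_n^\al)$ is increasing and dominated by $x^\al$ (in particular lies in $L^1(\M,\tau)$), and it is enough to prove $\tau(x_n^\al)\to\tau(x^\al)$, because then $\|x^\al-x_n^\al\|_1=\tau(x^\al)-\tau(x_n^\al)\to0$.

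Next I would fix $s>0$ and show $x_n(s\1+x_n)^{-1}\to x(s\1+x)^{-1}$ in $\|\cdot\|_1$-norm: the resolvent identity rewrites this difference as $-s(s\1+x_n)^{-1}(x-x_n)(s\1+x)^{-1}$, whose $\|\cdot\|_1$-norm is at most $s^{-1}\|x-x_n\|_1\to0$ since the two resolvents have $\|\cdot\|_\infty\leq s^{-1}$. Together with the operator monotonicity of $t\mapsto t/(s+t)$, which makes $x_n(s\1+x_n)^{-1}$ increasing in $n$, this means $\tau(x_n(s\1+x_n)^{-1})\nearrow\tau(x(s\1+x)^{-1})$ for every $s$. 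Then, writing $x_n=\int_0^\infty t\,e_n(dt)$ and invoking \eqref{t} and Tonelli exactly as in the first proof, $\tau(x_n^\al)=\frac{\sin\al\pi}{\pi}\int_0^\infty s^{\al-1}\tau(x_n(s\1+x_n)^{-1})\,ds$, and one last application of the monotone convergence theorem in the variable $s$ yields $\tau(x_n^\al)\nearrow\tau(x^\al)$, which closes the argument.

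The only genuinely nontrivial point is the $\|\cdot\|_1$-convergence of the resolvents in the previous paragraph: this is the sole place where the hypothesis is used in the strong form ``$x_n\to x$ in $\|\cdot\|_1$-norm'' rather than merely ``$x_n\leq x$'', and it is what forces the increasing limit of $x_n(s\1+x_n)^{-1}$ all the way up to $x(s\1+x)^{-1}$ instead of stalling below it; everything else is bookkeeping with Tonelli and normality. (If one prefers not to presuppose $x^\al\in L^1(\M,\tau)$, the same computation still gives $\tau(x_n^\al)\nearrow\tau(x^\al)$ in $[0,+\infty]$, so the stated conclusion holds exactly when this common value is finite.)
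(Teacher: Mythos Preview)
Your argument is correct and follows the same route as the paper: the integral representation \eqref{t}, the resolvent identity giving $\|x_n(s\1+x_n)^{-1}-x(s\1+x)^{-1}\|_1\leq s^{-1}\|x_n-x\|_1$, and monotone convergence to pass to $\tau(x_n^\al)\nearrow\tau(x^\al)$. The one place where the paper is more careful is that it does not assume operator monotonicity of $t\mapsto t^\al$ for unbounded $L^1$-elements but derives it en route via the truncated integrals $\widetilde{z}_m^M$ (noting explicitly that the classical result is stated only for bounded operators); you invoke it as known, which is defensible but is precisely the step the paper chose to justify.
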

\begin{proof}
For arbitrary $0\leq u\in L^1(\M,\tau)$, and $0<m<M$ set
\[
 \widetilde{z}_m^M(u)=\frac{\sin\al\pi}{\pi}\int_m^M\frac{1}{s^{1-\al}}u(s\1+u)^{-1}\,ds.
\]
The estimates
\[
 \|u(s\1+u)^{-1}\|_1\leq\frac{\|u\|_1}{s} \qquad \text{and} \qquad \|u(s\1+u)^{-1}\|_\infty\leq1
\]
show that the integral above is Bochner's integral in $L^1(\M,\tau)$ as well as in $\M$. For arbitrary $\xi\in\mathcal{D}(u^\al)$, we have, for the spectral representation
\[
 u=\int_0^\infty t\,p(dt)
\]
of $u$, the following relation
\begin{align*}
 &\langle\widetilde{z}_m^M(u)\xi|\xi\rangle=\frac{\sin\al\pi}{\pi}\int_m^M\frac{1}{s^{1-\al}}\langle u(s\1+u)^{-1}\xi|\xi\rangle\,ds\\
 &=\frac{\sin\al\pi}{\pi}\int_m^M\frac{1}{s^{1-\al}}\int_0^\infty\frac{t}{s+t}\,\|p(dt)\xi\|^2\,ds\\
 &\leq\frac{\sin\al\pi}{\pi}\int_0^\infty\frac{1}{s^{1-\al}}\int_0^\infty\frac{t}{s+t}\,\|(p(dt)\xi\|^2\,ds\\
 &=\frac{\sin\al\pi}{\pi}\int_0^\infty\int_0^\infty\frac{1}{s^{1-\al}}\frac{t}{s+t}\,ds\,\|(p(dt)\xi\|^2\\
 &=\int_0^\infty t^\al\|(p(dt)\xi\|^2=\langle u^\al\xi|\xi\rangle,
\end{align*}
showing that
\begin{equation}\label{z-u}
 \widetilde{z}_m^M(u)\leq u^\al.
\end{equation}
Moreover, we obtain as $m\searrow0$, $M\nearrow\infty$,
\begin{align*}
 &\tau(\widetilde{z}_m^M(u))=\frac{\sin\al\pi}{\pi}\int_m^M\frac{1}{s^{1-\al}}\tau(u(s\1+u)^{-1})\,ds\\
 =&\frac{\sin\al\pi}{\pi}\int_m^M\frac{1}{s^{1-\al}}\int_0^\infty\frac{t}{s+t}\,\tau(p(dt))\,ds\\
 &\nearrow\frac{\sin\al\pi}{\pi}\int_0^\infty\frac{1}{s^{1-\al}}\int_0^\infty\frac{t}{s+t}\,\tau(p(dt))\,ds\\
 =&\frac{\sin\al\pi}{\pi}\int_0^\infty\int_0^\infty\frac{1}{s^{1-\al}}\frac{t}{s+t}\,ds\,\tau(p(dt))=\int_0^\infty t^\al\tau(p(dt))=\tau(u^\al),
\end{align*}
which yields
\[
 \widetilde{z}_m^M(u)\to u^\al \quad \textit{in $\|\cdot\|_1$-norm}.
\]
For $0\leq u_1\leq u_2$ we have $s\1+u_1\leq s\1+u_2$ thus
\[
 (s\1+u_1)^{-1}\geq(s\1+u_2)^{-1},
\]
so
\[
 u_1(s\1+u_1)^{-1}=\1-s(s\1+u_1)^{-1}\leq\1-s(s\1+u_2)^{-1}=u_2(s\1+u_2)^{-1},
\]
which yields
\[
 \widetilde{z}_m^M(u_1)\leq\widetilde{z}_m^M(u_2).
\]
Passing to the limit with $m\searrow0$, $M\nearrow\infty$, we get
\begin{equation}\label{opmon}
 u_1^\al\leq u_2^\al,
\end{equation}
which shows the operator monotonicity of the function\\ $[0,+\infty)\ni t\mapsto t^\al$ for elements from $L^1(\M,\tau)$ (a result well-known for bounded operators).

Let $x_n$ and $x$ be as in the assumption. Then
\[
  \widetilde{z}_m^M(x_n)\leq\widetilde{z}_m^M(x).
\]
Further we have
\begin{align*}
 &x_n(s\1+x_n)^{-1}-x(s\1+x)^{-1}\\
 =&(s\1+x_n)^{-1}(x_n(s\1+x)-(s\1+x_n)x)(s\1+x)^{-1}\\
 =&s(s\1+x_n)^{-1}(x_n-x)(s\1+x_n)^{-1},
\end{align*}
thus
\begin{align*}
 &\|x_n(s\1+x_n)^{-1}-x(s\1+x)^{-1}\|_1\\
 \leq&s\|(s\1+x_n)^{-1}\|_\infty\|x_n-x\|_1\|(s\1+x_n)^{-1}\|_\infty\leq\frac{\|x_n-x\|_1}{s},
\end{align*}
which shows that
\[
 x_n(s\1+x_n)^{-1}\nearrow x(s\1+x)^{-1} \quad \textit{in $\|\cdot\|_1$-norm}.
\]
From the Lebesgue monotone convergence theorem we obtain as $n\to\infty$
\begin{align*}
 &\tau(\widetilde{z}_m^M(x_n))=\frac{\sin\al\pi}{\pi}\int_m^M\frac{1}{s^{1-\al}}\tau(x_n(s\1+x_n)^{-1})\,ds\\
 &\nearrow\frac{\sin\al\pi}{\pi}\int_m^M\frac{1}{s^{1-\al}}\tau(x(s\1+x)^{-1})\,ds=\tau(\widetilde{z}_m^M(x)),
\end{align*}
i.e.
\[
 \widetilde{z}_m^M(x_n)\nearrow\widetilde{z}_m^M(x) \quad \textit{in $\|\cdot\|_1$-norm}.
\]

From the relation \eqref{z-u} and operator monotonicity we obtain
\[
 \tau(\widetilde{z}_m^M(x_n))\leq\tau(x_n^\al)\leq\tau(x^\al),
\]
which gives
\begin{align*}
 \tau(\widetilde{z}_m^M(x))=\lim_{n\to\infty}\tau(\widetilde{z}_m^M(x_n))
 \leq\varliminf_{n\to\infty}\tau(x_n^\al)\leq\varlimsup_{n\to\infty}\tau(x_n^\al)\leq\tau(x^\al),
\end{align*}
and passing to the limit with $m\searrow0$, $M\nearrow\infty$, in the above inequality, we get
\begin{align*}
 \tau(x^\al)=\lim_{\substack{m\to0\\M\to\infty}}\tau(\widetilde{z}_m^M(x))
 \leq\varliminf_{n\to\infty}\tau(x_n^\al)\leq\varlimsup_{n\to\infty}\tau(x_n^\al)\leq\tau(x^\al),
\end{align*}
showing that
\[
 \lim_{n\to\infty}\tau(x_n^\al)=\tau(x^\al),
\]
which by virtue of the inequality $x_n^\al\leq x^\al$ proves the claim.
\end{proof}
Another version of Jensen's inequality is
\begin{proposition}\label{Jin2}
Let $\al>1$, and let $0\leq h\in L^1(\M,\tau)$ be such that\\ $h^\al\in L^1(\M,\tau)$. Then
\[
 \tau(\F(h^\al))\geq\tau(\F(h)^\al).
\]
\end{proposition}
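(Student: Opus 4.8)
The plan is to derive the \emph{operator} inequality $\F(h)\le\F(h^\al)^{1/\al}$ in $\widetilde{\M}$ and then raise it to the power $\al$ under the trace. (For $1<\al\le2$ this detour is unnecessary, since the operator Jensen inequality \eqref{JI} already gives $\tau(\F(h^\al))\ge\tau(\F(h)^\al)$ at once; the argument below, however, works uniformly for all $\al>1$.) Put $\gamma=1/\al\in(0,1)$ and $u=h^\al$, and note that $u$, $u^\gamma=h$ and $\F(u)=\F(h^\al)$ all belong to $L^1(\M,\tau)$. I would run the machinery from the proof of Proposition \ref{zb} with the exponent $\gamma$ in place of $\al$: for $0\le w\in L^1(\M,\tau)$ and $0<m<M$ set
\[
 \widetilde z_m^M(w)=\frac{\sin\gamma\pi}{\pi}\int_m^M\frac{1}{s^{1-\gamma}}\,w(s\1+w)^{-1}\,ds,
\]
a Bochner integral in $L^1(\M,\tau)$ and in $\M$.

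Three facts are then available essentially for free. First, exactly as for \eqref{z-u}, $\widetilde z_m^M(w)\le w^\gamma$ for every $0\le w\in L^1(\M,\tau)$, the inequality being read in $\widetilde{\M}$; here $w^\gamma$ is a positive $\tau$-measurable operator (its spectral projections $e([\lambda,\infty))$, $\lambda>0$, satisfy $\tau(e([\lambda,\infty)))\le\|w\|_1\lambda^{-1/\gamma}$ by Chebyshev's inequality), even though it need not be integrable. Secondly, since $u^\gamma=h\in L^1(\M,\tau)$, the trace computation in the proof of Proposition \ref{zb}, combined with the bound just mentioned, shows that $\widetilde z_m^M(u)\to h$ in $\|\cdot\|_1$-norm as $m\searrow0$, $M\nearrow\infty$. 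Thirdly, integrating the inequality \eqref{cc} against the positive kernel $\tfrac{\sin\gamma\pi}{\pi}s^{\gamma-1}\,ds$ and using that $\F$ commutes with Bochner integration, one gets $\F(\widetilde z_m^M(u))\le\widetilde z_m^M(\F(u))$.

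Combining these gives $\F(\widetilde z_m^M(h^\al))\le\widetilde z_m^M(\F(h^\al))\le\F(h^\al)^{1/\al}$ for all $0<m<M$. Since $\widetilde z_m^M(h^\al)\to h$ in $\|\cdot\|_1$-norm and $\F$ is $\|\cdot\|_1$-continuous, $\F(\widetilde z_m^M(h^\al))\to\F(h)$ in $\|\cdot\|_1$-norm, hence in measure; therefore the positive $\tau$-measurable operators $\F(h^\al)^{1/\al}-\F(\widetilde z_m^M(h^\al))$ converge in measure to $\F(h^\al)^{1/\al}-\F(h)$, and since $\widetilde{\M}^+$ is closed in the measure topology we conclude $\F(h)\le\F(h^\al)^{1/\al}$. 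It remains to note that $0\le A\le B$ in $\widetilde{\M}$ forces $\tau(A^p)\le\tau(B^p)$ for every $p>0$ (from $\mu_t(A)\le\mu_t(B)$ for the generalized singular-value functions together with $\tau(C^p)=\int_0^\infty\mu_t(C)^p\,dt$); applying this with $A=\F(h)$, $B=\F(h^\al)^{1/\al}$ and $p=\al$ yields $\tau(\F(h)^\al)\le\tau\big((\F(h^\al)^{1/\al})^\al\big)=\tau(\F(h^\al))$, which is the claim.

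The main obstacle is precisely that $\F(h^\al)^{1/\al}$ (and $u^\gamma$ for a general $u\in L^1(\M,\tau)$) need not lie in $L^1(\M,\tau)$, so Proposition \ref{zb} cannot be quoted verbatim for $\F(h^\al)$; one has to keep the estimate $\widetilde z_m^M(\F(h^\al))\le\F(h^\al)^{1/\al}$ inside $\widetilde{\M}$, rely on closedness of the positive cone in the measure topology for the passage to the limit, and invoke the generalized-$s$-number description of $\tau$ to legitimately raise the operator inequality $\F(h)\le\F(h^\al)^{1/\al}$ to the power $\al$. As in the earlier parts of the proof, one should also check that the integral representation of $t^\gamma$ and the inequality \eqref{cc} are valid for the possibly unbounded element $h^\al\in L^1(\M,\tau)$.
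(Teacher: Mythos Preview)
Your argument is correct, and it is a genuinely different route from the paper's. The paper proceeds by spectral truncation: it takes $h_n=\int_0^n t\,e(dt)$, uses the classical trace Jensen inequality $\tau(\F(h_n^\al))\geq\tau(\F(h_n)^\al)$ for bounded operators and the convex function $t\mapsto t^\al$, invokes Proposition~\ref{zb} to get $\F(h_n)^\beta\to\F(h)^\beta$ in measure for the fractional part $\beta=\al-[\al]$ (hence $\F(h_n)^\al\to\F(h)^\al$ in measure by continuity of multiplication), and finishes with Fatou's lemma. By contrast you bypass truncation entirely: you run the integral representation for $t^{1/\al}$ on $u=h^\al$ and on $\F(u)$, use \eqref{cc} to obtain the \emph{operator} inequality $\F(h)\le\F(h^\al)^{1/\al}$ in $\widetilde{\M}$, and then raise to the power $\al$ under the trace via the generalized singular-value calculus. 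Your approach has the advantage of yielding the stronger operator inequality $\F((h^\al)^{1/\al})\le\F(h^\al)^{1/\al}$ as a by-product, and of avoiding both the bounded-operator Jensen step and Fatou's lemma; the price is that you must work in $\widetilde{\M}$ with $\F(h^\al)^{1/\al}$ possibly non-integrable and appeal to the Fack--Kosaki machinery ($\mu_t(A)\le\mu_t(B)$ for $0\le A\le B$, and $\tau(C^p)=\int_0^\infty\mu_t(C)^p\,dt$), which the paper does not otherwise develop. The paper's argument is thus more self-contained within the tools already set up, while yours is cleaner once those external facts are granted. Your caveats in the last paragraph are well placed but are not real obstacles: \eqref{cc} is cited from \cite{LP} for arbitrary densities in $L^1(\M,\tau)$, so it applies to $h^\al$ verbatim, and the inequality $\widetilde z_m^M(w)\le w^\gamma$ for $w=\F(h^\al)$ follows exactly as in \eqref{z-u} since that computation never used integrability of $u^\al$.
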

\begin{proof}
We have
\[
 h_n=\int_0^nt\,e(dt)\nearrow\int_0^\infty t\,e(dt)=h \quad \textit{in $\|\cdot\|_1$-norm},
\]
and
\[
 h_n^\al=\int_0^nt^\al\,e(dt)\nearrow\int_0^\infty t^\al\,e(dt)=h^\al \quad \textit{in $\|\cdot\|_1$-norm},
\]
thus
\[
 \F(h_n)\nearrow\F(h) \quad \textit{in $\|\cdot\|_1$-norm}
\]
and
\[
 \F(h_n^\al)\nearrow\F(h^\al) \quad \textit{in $\|\cdot\|_1$-norm}.
\]
In particular,
\[
 \F(h_n)\to\F(h) \quad \textit{in measure}
\]
and
\[
 \tau(\F(h_n^\al))\to\tau(\F(h^\al)).
\]
Let $\al=r+\beta$, where $r=[\al]$, $0\leq\beta<1$. On account of Proposition \ref{zb} we have
\[
 \F(h_n)^\beta\to\F(h)^\beta \quad \textit{in $\|\cdot\|_1$-norm},
\]
so
\[
 \F(h_n)^\beta\to\F(h)^\beta \quad \textit{in measure},
\]
yielding, since multiplication is jointly continuous in the measure topology,
\[
 \F(h_n)^\al=\F(h_n)^r\F(h_n)^\beta\to\F(h)^r\F(h)^\beta=\F(h)^\al \quad \textit{in measure}.
\]
Since the function $[0,+\infty)\ni t\mapsto t^\al$ is convex, Jensen's inequality for bounded operators implies
\[
 \tau(\F(h_n^\al))\geq\tau(\F(h_n)^\al).
\]
For every $n$ we have
\[
 \|\F(h_n)^\al\|_1=\tau(\F(h_n)^\al)\leq\tau(\F(h_n^\al))=\tau(h_n^\al)\leq\tau(h^\al)<+\infty,
\]
and Fatou's lemma --- \cite[Theorem 2.9]{Y} --- yields
\begin{align*}
 \tau(\F(h)^\al)&=\|\F(h)^\al\|_1\leq\varliminf_{n\to\infty}\|\F(h_n)^\al\|_1\\
 &=\varliminf_{n\to\infty}\tau(\F(h_n)^\al)\leq\varliminf_{n\to\infty}\tau(\F(h_n^\al))=\tau(\F(h^\al)). \qedhere
\end{align*}
\end{proof}
\begin{proof}[Proof for $\al\geq2$]
Here we additionally assume that there exists\\ $1<\gamma\leq2$ such that $h^\gamma\in L^1(\M,\tau)$. Let
\[
 f(t)=t^{\frac{\al}{\gamma}}, \quad t\in[0,+\infty).
\]
By virtue of Proposition \ref{Jin2} \big(with $h$ replaced by $h^\gamma$ and $\al$ replaced by $\frac{\al}{\gamma}$\big), we have
\begin{align*}
 \tau\big(\F(h^\al)\big)&=\tau\big(\F\big((h^\gamma)^{\frac{\al}{\gamma}}\big)\big)
 \geq\tau\big(\big(\F(h^\gamma)^{\frac{\al}{\gamma}}\big)=\tau\big(f\big(\F\big(h^\gamma\big)\big)\big).
\end{align*}
Jensen's inequality \eqref{JI} is
\[
 \F\big(h^\gamma\big)\geq\F(h)^\gamma,
\]
and since $f$ is strictly increasing, continuous, convex, and $f(0)=0$, we obtain by virtue of \cite[Theorem 14]{HK}
\begin{equation}\label{>}
 \tau\big(f\big(\F\big(h^\gamma\big)\big)\big)\geq\tau\big(f\big(\F(h)^\gamma\big)\big)=\tau(\F(h)^\al).
\end{equation}
Since
\[
 \tau\big(\F(h^\al)\big)=\tau(\F(h)^\al),
\]
we have equality in the inequality \eqref{>} above which again by virtue of \cite[Theorem 14]{HK} yields the equality
\[
 \F\big(h^\gamma\big)=\F(h)^\gamma. \qedhere
\]
\end{proof}
\begin{remark}
Observe that if $\F$ is a *-isomorphism on the algebra $W^*(h)$ such that $\tau$ is $\F$-invariant, then for every bounded continuous function $f$ we have $\F(f(h))=f(\F(h))$, and it is easy to prove that for $\al\in(0,1)\cup(1,+\infty)$ such that $ h^\al\in L^1(\M,\tau)$ we have $\F(h^\al)=\F(h)^\al$, and thus $S_\al(h)=S_\al(\F(h))$.
\end{remark}

\section{Additional results}
In this section, we consider the situation when the map $\F$ does not change an arbitrary fixed entropy of every density.
\begin{theorem}
Let $\F$ be a normal linear unital mapping on $\M$ such that $\tau\circ\F=\tau$, and let for an arbitrary fixed $\al\in(0,1)\cup(1,+\infty)$
\[
 S_\al(h)=S_\al(\F(h))
\]
for every density $h$ such that $h^\al\in L^1(\M,\tau)$. Then $\F$ is a Jordan\\ *-isomorphism on $\M$.
\end{theorem}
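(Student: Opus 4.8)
The plan is to deduce the statement from the Main Theorem by feeding it a well-chosen family of densities --- scalar multiples of projections of finite trace, and sums of two such projections --- and then to propagate the resulting algebraic identities from this family to all of $\M$ using normality of $\F$ and density arguments. Throughout I shall use that a positive unital map is $*$-preserving and has norm $\|\F\|=\|\F(\1)\|=1$.

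First I would extract pointwise information from the Main Theorem. If $p$ is a projection with $\tau(p)<+\infty$ and $\lambda>0$, then $h=\lambda p$ is a density with $h^\al=\lambda^\al p\in L^1(\M,\tau)$, and since $h$ is bounded with finite-trace support the extra hypothesis required in the Main Theorem when $\al>2$ holds automatically; hence $\F|W^*(p)$ is a $*$-isomorphism. As $p^2=p$, this forces $\F(p)$ to be a projection, and $\tau(\F(p))=\tau(p)$. Applying the same to $h=p+q$ for projections $p,q$ of finite trace --- again bounded, with support $p\vee q$ of finite trace --- and using that $W^*(p+q)$ is abelian, we obtain $\F\bigl((p+q)^2\bigr)=\F(p+q)^2$. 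Expanding $(p+q)^2=p+q+(pq+qp)$ on the left and $\F(p+q)^2=\bigl(\F(p)+\F(q)\bigr)^2$ on the right, substituting $\F(p)^2=\F(p)$, $\F(q)^2=\F(q)$, and cancelling by additivity of $\F$, we arrive at the Jordan identity
\[
 \F(pq+qp)=\F(p)\F(q)+\F(q)\F(p)
\]
for all projections $p,q$ of finite trace. I expect this polarization step --- producing the Jordan product out of the purely multiplicative behaviour of $\F$ on the abelian algebras $W^*(p+q)$ --- to be the conceptual heart of the argument.

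Next I would extend this identity to all of $\M$. By linearity it holds with $p,q$ replaced by arbitrary elements of the linear span $\mathcal F$ of finite-trace projections, i.e. $\F(x\circ y)=\F(x)\circ\F(y)$ for $x,y\in\mathcal F$, where $x\circ y=\tfrac12(xy+yx)$. Since $\mathcal F$ is norm-dense in $e\M e$ for every projection $e$ of finite trace (spectral Riemann sums), and both sides are norm-continuous in each variable, the identity passes to the $*$-subalgebra $\N:=\bigcup_{\tau(e)<+\infty}e\M e$, which is strong-$*$ dense in $\M$ by semifiniteness of $\tau$. Approximating an arbitrary $x\in\M$ strong-$*$ by a bounded net in $\N$ (Kaplansky density theorem) and using that $\F$ is normal, that $z\mapsto z\circ u$ is $\sigma$-weakly continuous for each fixed $u$, and that $\circ$ sends bounded strong-$*$ convergent nets to strong-$*$ convergent ones, one passes to the limit --- first in one variable, then in the other --- to get $\F(x\circ y)=\F(x)\circ\F(y)$ for all $x,y\in\M$; thus $\F$ is a Jordan $*$-homomorphism. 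I regard this step as the main technical obstacle, since multiplication is only separately, not jointly, $\sigma$-weakly continuous, so one must keep careful track of the topology in which each limit is taken.

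Finally I would check injectivity. If $y\geq0$ and $\F(y)=0$, then for each $\epsilon>0$ the spectral projection $p_\epsilon=\chi_{[\epsilon,+\infty)}(y)$ satisfies $0\leq\epsilon\F(p_\epsilon)\leq\F(y)=0$, so $\F(p_\epsilon)=0$, whence $\tau(p_\epsilon)=\tau(\F(p_\epsilon))=0$ and $p_\epsilon=0$ by faithfulness of $\tau$; letting $\epsilon\searrow0$ gives $y=0$. For general $x$ with $\F(x)=0$, since $\F$ is $*$-preserving it annihilates the real and imaginary parts of $x$, and for a selfadjoint $a$ with $\F(a)=0$ the Jordan identity gives $\F(a^2)=\F(a)^2=0$ with $a^2\geq0$, hence $a^2=0$ and $a=0$; so $x=0$. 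Therefore $\F$ is a Jordan $*$-isomorphism of $\M$ onto $\F(\M)$, which is the assertion. (Surjectivity onto all of $\M$ should not be expected: the trace-preserving, unital, normal $*$-endomorphisms of the hyperfinite $\mathrm{II}_1$ factor already satisfy all the hypotheses.)
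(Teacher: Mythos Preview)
Your proof is correct, but the route differs from the paper's in one substantive way. The paper does not polarise over \emph{pairs} of projections. Instead, having obtained $\F(h^2)=\F(h)^2$ for a single bounded density $h$ from the Main Theorem, it invokes a Choi--Kadison type result (\cite[Proposition~9]{LP}): for a positive unital map, the equality $\F(h^2)=\F(h)^2$ already forces the mixed identity $\F(x\circ h)=\F(x)\circ\F(h)$ for \emph{every} $x\in\M$. This means the paper only has to vary one argument in the density step: such $h$'s are $\sigma$-weakly dense in $\M^+$, and both sides of the Jordan identity are $\sigma$-weakly continuous in $h$ for fixed $x$, so one limit suffices. Your approach trades this external lemma for the explicit polarisation on $h=p+q$, which is more self-contained but costs you a two-variable density argument (span of finite-trace projections $\to e\M e\to\N\to\M$, with limits taken separately in each slot). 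Both arguments are sound; the paper's is shorter if one is willing to quote \cite{LP}, yours is more elementary. Your injectivity argument via spectral projections is also different from, and slightly longer than, the paper's, which simply observes $\F(x)=0\Rightarrow\F(x^*x+xx^*)=\F(x)^*\F(x)+\F(x)\F(x)^*=0\Rightarrow\F(x^*x)=0\Rightarrow\tau(x^*x)=0$. Finally, your closing remark about surjectivity is well taken: the paper's proof, like yours, establishes only that $\F$ is an injective Jordan $*$-homomorphism.
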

\begin{proof}
Let $h\in\M$ be a density such that $h^\al\in L^1(\M,\tau)$ and, in the case $\al>2$, $h^\gamma\in L^1(\M,\tau)$ for some $1<\gamma\leq2$. From the Main Theorem it follows that $\F|W(h)$ is a *-isomorphism, in particular, we have
\[
 \F(h^2)=\F(h)^2.
\]
On account of \cite[Proposition 9]{LP}, for all $x\in\M$ the following equality holds
\begin{equation}\label{Jor}
 \F(x\circ h)=\F(x)\circ\F(h),
\end{equation}
where for $u,v\in\M$
\[
 u\circ v=\frac{uv+vu}{2},
\]
is the Jordan product.

Since such $h$'s as above are $\sigma$-weakly dense in $\M^+$, and $\F$ is normal, it follows that the equation \eqref{Jor} holds for all $h$ in $\M^+$, and consequently we have
\[
 \F(x\circ y)=\F(x)\circ\F(y).
\]
for all $x,y\in\M$, thus $\F$ is a Jordan *-homomorphism.

If $\F(x)=0$, then
\[
 0=\F(x)^*\F(x)+\F(x)\F(x)^*=\F(x^*x+xx^*)=\F(x^*x)+\F(xx^*)
\]
thus
\[
 \F(x^*x)=\F(xx^*)=0,
\]
and it follows that $\tau(x^*x)=\tau(\F(x^*x))=0$, i.e. $x=0$, showing that $\F$ is an isomorphism.
\end{proof}
\begin{remark}
If $\F$ is a Jordan *-isomorphism, then it is obviously\\ a *-isomorphism on the abelian algebra $W^*(h)$, thus the equality $S_\al(h)=S_\al(\F(h))$ follows as in the previous remark.
\end{remark}
At the end let us comment on a general situation. In many cases the quantities of the form
\begin{equation}\label{H}
 H(h)=\tau(f(h))
\end{equation}
where $h$ is a density, and $f$ is a continuous function, are considered (typical examples are $f(t)=t\log t$ leading to Segal's entropy or $f(t)=t^\al$ leading to Renyi's entropy). Then a natural question is the invariance of $H$ with respect to some map $\F$, i.e. the relation
\begin{equation}\label{inv}
 H(h)=H(\F(h)).
\end{equation}
Putting aside technical problems ($h$ may be unbounded), we have noticed that, in principle, if $\F$ is a normal Jordan *-isomorphism, then $\F(f(h))=f(\F(h))$, thus the additional assumption $\tau\circ\F=\tau$ yields the relation \eqref{inv}. Consequently, the condition: `$\F$ \emph{is a normal Jordan *-isomorphism such that} $\tau\circ\F=\tau$' is \emph{sufficient} for the invariance \eqref{inv}. A similar situation occurs when we deal with two densities. A typical example, which for simplicity we consider in finite dimension, is the relative entropy of two densities $h$ and $k$ defined as
\[
 D(h||k)=\begin{cases}
  \tr h(\log h-\log k), & \text{if $\s h\leq\s k$}\\
  +\infty, & \text{otherwise}
 \end{cases}.
\]
Now if $\F$ is a normal Jordan *-isomorphism leaving the trace `$\tr$' invariant, we have $\s\F(h)=\F(\s h)$ and $\s\F(k)=\F(\s k)$, thus $\s h\leq\s k$ if and only if $\s\F(h)\leq\s\F(k)$ in which case we get the equality
\begin{align*}
 &D(\F(h)||\F(k))=\tr\F(h)(\log\F(h)-\log\F(k))\\
 =&\tr(\F(h)\circ(\log\F(h)-\log\F(k)))\\
 =&\tr(\F(h)\circ(\F(\log h)-\F(\log k)))\\
 =&\tr(\F(h)\circ\F(\log h-\log k))=\tr\F(h\circ(\log h-\log k))\\
 =&\tr(h\circ(\log h-\log k))=\tr h(\log h-\log k)=D(h||k),
\end{align*}
showing that the same condition: `$\F$ \emph{is a normal Jordan *-isomorphism such that} $\tau\circ\F=\tau$' is again \emph{sufficient} for the invariance of the relative entropy. The above reasoning shows that some results which state e.g. that a map leaving the relative entropy, or the $H$ as in \eqref{H}, invariant must be of the form $\F(x)=uxu^*$ where $u$ is a unitary or antiunitary or an isometry are incorrect (a good example of such a Jordan *-isomorphism which does not change the entropy but is not of the form as above, is transposition in $\BH$ with respect to a given orthonormal basis).

\end{document}